\begin{document}
\newtheorem{emp}{Example}
\newtheorem{cn}{Conjecture}
\newtheorem{cond}{Condition}
\newtheorem{lm}{Lemma}
\newtheorem{thm}{Theorem}
\newtheorem{cor}{Corollary}
\newtheorem{df}{Definition}
\newtheorem{pf}{Proof}
\newtheorem{alg}{Algorithm}
\newtheorem{rmk}{\bf Remark}

%\hyphenation{IEEE International Symposium on Information Theory 2017}

\title{From Co-prime to the Diophantine Equation Based Sparse Sensing}
\hyphenation{IEEE Signal Processing Letters}

%\markboth{IEEE Transaction on Signal Processing}%
%{Shell \MakeLowercase{\textit{et al.}}: Bare Demo of IEEEtran.cls for IEEE Journals}
%\maketitle

\let\lc\langle
\let\rc\rangle

\date{}
%\markboth{IEEE Transaction on Signal Processing}%
%{Shell \MakeLowercase{\textit{et al.}}: Bare Demo of IEEEtran.cls for IEEE Journals}
\date{}

\author{Hanshen Xiao and Guoqiang Xiao

\thanks{Hanshen Xiao is with CSAIL and EECS Department of MIT, Cambridge, USA. E-mail: hsxiao@mit.edu.}
\thanks{Guoqiang Xiao is with College of Computer and Information Science, Southwest University, Chongqing, China. E-mail: gqxiao@swu.edu.cn.}}
\maketitle

\maketitle

\begin{abstract}
%For frequency estimation, the co-prime sampling suggests that in time domain, by two sub-Nyquist samplers with $M_1$ and $M_2$ co-prime down-sampling rates, respectively, up to $O(M_1M_2)$ frequencies can be estimated based on autocorrelation. Similarly, in space domain for Direction-of-arrival (DOA) estimation, co-prime arrays are made of two uniform linear arrays where up to $O(N^2)$ sources can be resolved with $O(N)$ sensors. In general,
With a careful design of sample spacings either in temporal and spatial domain, co-prime sensing can reconstruct the autocorrelation at a significantly denser set of points based on Bazout theorem. However, still restricted from Bazout theorem, it is required $O(M_1+M_2)$ samples to estimate frequencies in the case of co-prime sampling, where $M_1$ and $M_2$ are co-prime down-sampling rates. Besides, for Direction-of-arrival (DOA) estimation, the sensors can not be arbitrarily sparse in co-prime arrays. In this letter, we restrain our focus on complex waveforms and present a framework under multiple samplers/sensors for both frequency and DOA estimation based on Diophantine equation, which is essentially to estimate the autocorrelation with higher order statistics instead of the second order one. We prove that, given arbitrarily high down-sampling rates, there exist sampling schemes with samples to estimate autocorrelation only proportional to the sum of degrees of freedom (DOF) and the number of snapshots required. In the scenario of DOA estimation, we show there exist arrays of $N$ sensors with $O(N^3)$ DOF and $O(N)$ minimal distance between sensors.
%up to $O(MN)$ freedom in the spectrum estimation can be achieved with $O(M+N)$ physical samples. The idea behind the scheme relies on the well-known Bzout's identity where the difference of two uniform subarrays, the multiples of $M$ and $N$ respectively, can construct a consecutive sequence in a length of $O(MN)$. Consequently, the pairs of those $O(M+N)$ physical samples in two sub-Nyquist rates can generate $O(MN)$ virtual consecutive samples in Nyqvist rates.

\end{abstract}

\begin{IEEEkeywords}
Coprime sampling, Sparse sampling, Linear Diophantine equation.
\end{IEEEkeywords}
\begin{section}{Introduction}
\noindent The study on co-prime sparse sensing, which was initialized in \cite{coprime1,coprime2}, spans almost last decade and has witnessed tremendous progress \cite{co-array1,pushing,qin,localization,co-array2}. It can be used in autocorrelation reconstruction based estimation with enhanced degrees of freedom (DOF) while it also preserves sparsity either in time or space domain. The key idea of co-prime sensing is that, for two sequences $\mathscr{M}_1=\{m_1M_1\tau, m_1=0,1,2,...,2M_2-1\}$ and $\mathscr{M}_2=\{m_2M_2\tau, m_2=0,1,2,...,M_1-1\}$, where $M_1$ and $M_2$ are co-prime integers and $\tau$ is some unit, the difference set of the pair of elements from $\mathscr{M}_1$ and $\mathscr{M}_2$ respectively will include all consecutive multiples of $\tau$ starting from $-M_1M_2\tau$ to $M_1M_2\tau$. In the applications of frequency estimation, the two sequences, $\mathscr{M}_1$ and $\mathscr{M}_2$, represent the sampling instants in the time domain from two samplers, while for the case of Direction-of-arrival (DOA) estimation, $\mathscr{M}_1$ and $\mathscr{M}_2$ correspond to the sensor positions of two uniform arrays, respectively. Such carefully designed difference set from $\mathscr{M}_1$ and $\mathscr{M}_2$ can construct $O(M_1M_2)$ consecutive lags for autocorrelation estimation, which is usually used in spectral estimation methods, such as MUSIC and ESPRIT \cite{music} etc. On the other hand, $M_1$ and $M_2$ can be arbitrarily large provided the co-prime property is preserved. Without dispute, the enhanced sparsity and DOF will make breakthrough in physical limitations. Since such idea can be generalized to any integer Ring, apart from one dimension, higher dimension cases have also been explored \cite{twodimension-1,twodimension-2,multidimension}. Though co-prime sensing has been well studied, two problems related remain open. 
\begin{enumerate}
\item Whether in the case of frequency estimation, there exists a more flexible sampling scheme to trade off time delay against resolution. If $L$ snapshots are required for autocorrelation estimation, the time delay of co-prime sampling is $O(LM_1M_2)$. When $M_1$ and $M_2$ are large enough, such delay is unacceptable.
\item Whether there exists an array structure which can achieve a larger minimal distance between sensors and more enhanced DOF. In DOA estimation, a closer placement of sensors will incur a tighter mutual coupling. However, the minimal distance between sensors in existing co-prime arrays or (super) nested arrays  \cite{2014nested,super1,super2,super3,nest,2019nested}, is fixed to be a half of wavelength.
\end{enumerate}
In this paper, we answer both questions affirmatively in the case of complex waveforms. {In Section II, details of Diophantine equation based sampling are provided and Theorem 1 suggests the new scheme only requires $O(K+L)$ samples, where $K$ is the number of consecutive lags required. Further, the case of arbitrarily $N$ samplers is investigated and the results are shown in Theorem 2. In Section III, a framework of DOA estimation is presented with similar idea. Theorem 3 shows Diophantine equation based arrays can provide $O(N^3)$ DOF and the minimal distance between sensors is $O(N)$. }
\end{section}

\section{Diophantine Equation Based Sampling}
\noindent We first flesh out the main idea of co-prime sampling. Let us consider a complex waveform formed by $D$ sources,
\begin{equation}
x(t) = \sum_{i=1}^{D} A_{i}e^{j(\omega_{i}t+\phi_{i})} = \sum_{i=1}^{D} A_{i}e^{j\phi_{i}}e^{j\omega_{i}t}
\end{equation}
where $\omega_{i} = 2\pi f_{i}T_{s}$ is the digital frequency and $T_s$ is the Nyquist sampling interval. $A_{i}$, $f_i$ and $\phi_{i}$ are the amplitude, the frequency and the phase of the $i$th source, respectively. The phases $\phi_{i}$ are assumed to be random variables uniformly distributed in the interval $[0,~2\pi]$ and uncorrelated from each other. The two sampled sequences with sampling interval $M_1T_s$ and $M_2T_s$, respectively, are expressed as
\begin{equation}
\begin{aligned}
 &x_1[n] = x(nM_1T_s)+w_1(n)\\
 &x_2[n] = x(nM_2T_s)+w_2(n)
\end{aligned}
\end{equation}
where $w_1(n)$ and $w_2(n)$ are zero mean Gaussian white noise with power $\sigma^2$. %With Bazout's identity, there exist $\{\hat{\alpha}_1 = \langle M^{-1} \rangle_N, \hat{\beta}_1 =  \langle -N^{-1} \rangle_M \}$, where $
%\langle X \rangle_Y$ denotes the residue of $X$ modulo $Y$, such that $\hat{\alpha}_1M- \hat{\beta}_1N=1$. Thus we can always find some solutions to the following equation by selecting ${\alpha}_k =  k{\alpha}_1$ and ${\beta}_k = k{\beta}_1$,
%\begin{equation}
%{\alpha}_kM- {\beta}_kN = k
%\end{equation}
In \cite{coprime1,coprime2}, it is shown that for $r \in \mathbb{Z}$, there exist $m^r_{1k} \in \{rM_2, rM_2+1, ... ,(r+2)M_2-1\}$ and ${m}^r_{2k} \in \{rM_1, rM_1+1, ... ,(r+1)M_1-1\}$ such that
\begin{equation}
\label{Bzout}
{m}^r_{1k}M_1- {m}^r_{2k}M_2 = k
\end{equation}
for $k = \{-M_1M_2,...,0,1, ...,M_1M_2\}$. Furthermore, the autocorrelation of $x[n]$ can be expressed as
\begin{equation}
\label{corre}
R_x[k] = \mathbb{E}_n\{x[n]x^*[n-k]\}=\sum_{i=1}^D A^2_i e^{j\omega_{i}k}
\end{equation}
According to equation (\ref{Bzout}), $R_x[k]$ can be also estimated by using the average of the inner product of those pairs $\{x_1[m^r_{1k}], x_2[m^r_{2k}]\}$ with sparse sampling. To this end, by rewriting $R_x[k]$ in the context of $x_1[m^r_{1k}]$ and $x_2[m^r_{2k}]$, i.e., $E_r\{x_1[m^r_{1k}] \cdot x^*_2[m^r_{2k}]\}$, we have demonstrated that $\mathbb{E}_n\{x[n]x^*[n-k]\}=E_r\{x_1[m^r_{1k}] \cdot x^*_2[m^r_{2k}]\}$ in (\ref{haha}), which implies the validity of co-prime sampling.
\newcounter{mytempeqncnt}
%\hrulefill
\begin{figure*}[!h]
\normalsize
\setcounter{mytempeqncnt}{\value{equation}}
\setcounter{equation}{5}
  \begin{equation}
  \label{haha}
\begin{aligned}
&\mathbb{E}_r[x_1[{m}^r_{1k}]x^*_2[{m}^r_{2k}]] =\mathbb{E}_r \{[ \sum_{i=1}^{D} A_ie^{j\phi_i}e^{j\omega_{i}{m}^r_{1k}M}+w_1({m}^r_{1k})][ \sum_{i=1}^{D} A_ie^{j\phi_i}e^{-j\omega_{i}{m}^r_{2k}N}+w^*_2({m}^r_{2k}) ]\}\\
& = \mathbb{E}_r [ ~\sum_{i=1}^{D} A^2_i e^{j\omega_{i}({m}^r_{1k}M-{m}^r_{2k}N)}~]+\mathbb{E}_r[~\sum_{i \not = l}A_ie^{j\phi_i} e^{j\omega_{i}{m}^r_{1k}M} A_le^{j\phi_l}e^{-j\omega_{l}{m}^r_{2k}N}~]+ \mathbb{E}_r [w_1({m}^r_{1k})w^*_2({m}^r_{2k})]\\
& = \mathbb{E}_r [ ~\sum_{i=1}^{D} A^2_i e^{j\omega_{i}k}] +  \mathbb{E}_r[~\sum_{i \not = l} A_i A_l e^{j(\phi_i-\phi_l)} e^{j\omega_{l}k}  e^{j{m}^r_{1k}M( \omega_i -\omega_{l})}~]  = \sum_{i=1}^{D} A^2_i e^{j\omega_{i}k} = \mathbb{E}[x[n]x^*[n-k]]
\end{aligned}
\end{equation}
\setcounter{equation}{\value{mytempeqncnt}}
\hrulefill
%\vspace*{4pt}
\end{figure*}
\setcounter{equation}{6}
If $L$ snapshots are used to estimate each $\mathbb{E}_n \{x[n]x^*[n-k]\}$, the delay of co-prime sampling is $O(LM_1M_2T_s)$ \cite{complexity}. When $M_1$ and $M_2$ are large enough, such delay is unacceptable in many applications.
%\hrulefill
%\begin{figure*}[!h]
%\normalsize
%\setcounter{mytempeqncnt}{\value{equation}}
%\setcounter{equation}{5}
%    \begin{equation}
%          \label{corre}
%\begin{aligned}
%E[x[n]x^*[n-k]] &= E_n \{[ ~\sum_{i=1}^{D} A_ie^{j\phi_i}e^{j\omega_{i}n}+w(n)] [\sum_{i=1}^{D} A_ie^{-j\phi_i}e^{-j\omega_{i}(n-k)}+w^*(n-k)~ ]\} \\
%&= E_n [\sum_{i=1}^D A^2_ie^{j\omega_{i}k}] + E_n[\sum_{i \not = l} A_ie^{j\phi_i}e^{j\omega_{i}n}A_le^{-j\phi_l} e^{-j\omega_{l}(n-k)}] + E_n[w(n)w^*(n-k)]\\
%& = \sum_{i=1}^D A^2_ie^{j\omega_{i}k} + E_n [\sum_{i \not = l} A_iA_le^{j(\phi_i-\phi_l)}e^{j\omega_{l}k} e^{j(\omega_{i}- \omega_{l})n}]
%\end{aligned}
%      \end{equation}
%\setcounter{equation}{\value{mytempeqncnt}}
%\hrulefill
%\vspace*{4pt}
%\end{figure*}
%\setcounter{equation}{6}
%It is noted that $w(n)$ is uncorrelated zero mean white noise sequences with power $\sigma_n^2$ and signals and noise are uncorrelated. Furthermore, we have
%\begin{equation}
%\begin{aligned}
%&E_n [\sum_{i \not = l} A_i A_le^{j(\phi_i-\phi_l)}e^{j\omega_{l}k} e^{j(\omega_{i}- \omega_{l})n}] \\
%&= \sum_{i \not = l} A_i A_le^{j(\phi_i-\phi_l)}e^{j\omega_{l}k}E_n [~ e^{j(\omega_{i}- \omega_{l})n}~] =0
%\end{aligned}
%\end{equation}
%Therefore, $E_n [x[n]x^*[n-k]] = \sum_{i=1}^D A^2_i e^{j\omega_{i}k}$.

{When very high resolution is not necessary, it would be beneficial to balance the time delay against the resolution in order to reduce the delay. To this end, we propose to estimate the autocorrelation with higher order statistics. Clearly higher order statistics will be less robust than the lower ones, while, as shown soon, much stronger flexibility is achievable. To be specific, we consider a class of generic Diophantine equation
\begin{equation}
\label{threedio}
m_1M_1 + m_2M_2 + m_3M_3 = k,
\end{equation}
instead of the special case with only two variables in (\ref{Bzout}). Equation (\ref{threedio}) yields the seed of proposed framework of Diophantine equation based sparse sensing and it has integer solutions if and only if $gcd(M_1,M_2,M_3)|k$ is satisfied. Here $gcd$ denotes the greatest common divisor} \footnote{Please note we do not require those three integers to be relatively co-prime.}.

{Suppose there are three samplers with down sampling rate $M_1$, $M_2$ and $M_3$, respectively, where $gcd(M_1,M_2,M_3)=1$. In order to find out the solutions, $\{m_1, m_2, m_3\}$, of equation (\ref{threedio}), we first construct equation (\ref{Dio}). There exist two groups of integers, $\{a_1, a_2, a_3\}$ and $\{b_1,b_2,b_3\}$, such that
\begin{equation}
\label{Dio}
\left\{
            \begin{array}{lr}
a_1M_1+a_2M_2+a_3M_3 = 0 \\
b_1M_1+b_2M_2+b_3M_3 = 1 \\
 \end{array}
             \right.
\end{equation}
and the signs of $\{a_i\}$ are not the same, nor are $\{b_i\}$. Here we use the fact that \cite{hardy} (\ref{Dio}) has solutions if $gcd(M_1,M_2,M_3) = 1$. Then according to equation (\ref{Dio}) for each $k, l \in \mathbb{Z}$, $k=1,2,...,K$ and $l=1,2,...,L$, we have
\begin{equation}
\label{diosolution}
(kb_1+la_1)M_1 + (kb_2+la_2)M_2 +(kb_3+la_3)M_3 = k
\end{equation}
}
Without loss of generality, supposing $(kb_1+la_1)$ and $(kb_3+la_3)$ to be positive and $(kb_2+la_2)$ to be negative, we construct the sequence $\{{x}_1[kb_1+la_1]{x}^*_2[-(kb_2+la_2)]{x}_3[kb_3+la_3]\}$ to estimate the autocorrelation, where ${x}_i[n] = x(M_inT_s) + w_i(n)$ denotes the sample sequence with a downsampling rate $M_i$. In (\ref{proof-main}), we show that $\mathbb{E}_l \{{x}_1[kb_1+la_1]{x}^*_2[-(kb_2+la_2)]{x}_3[kb_3+la_3]\} = \sum_{i=1}^D A^3_ie^{j\phi_i}e^{j\omega_ik}$, once $a_1M_1(\omega_i-\omega_v)+a_2M_2(\omega_u-\omega_v) \not = 0$ for $i\not =v \not = u \in \{1,2,...,D\}$, which fails with negligible probability. The results of autocorrelation estimated using the constructed sequence will be the same as that of $x[n]$ with denser samples.
\begin{figure*}[!h]
\normalsize
\setcounter{mytempeqncnt}{\value{equation}}
\setcounter{equation}{9}
  \begin{equation}
  \label{proof-main}
\begin{aligned}
&\mathbb{E}_l \{{x}_1[kb_1+la_1]{x}^*_2[-(kb_2+la_2)]{x}_3[kb_3+la_3]\} = \\
& \sum_{i=1}^{D} A^3_i e^{j\phi_i}e^{jk\omega_{i}} + \sum_{i \not = u, v}  A_iA_uA_v e^{j[kb_1M_1\omega_i+kb_2M_2\omega_u+kb_3M_3\omega_v ]}e^{j(\phi_i-\phi_u+\phi_v)}\cdot \mathbb{E}_{l} \{e^{jl[a_1M_1(\omega_i-\omega_v)+a_2M_2(\omega_u-\omega_v)]}\}
\end{aligned}
\end{equation}
\setcounter{equation}{\value{mytempeqncnt}}
\hrulefill
%\vspace*{4pt}
\end{figure*}
\setcounter{equation}{10}
{Furthermore, if all the above assumptions hold, the time delay of the proposed scheme is determined by the maximum value of $|kb_i+la_i|$. In the following, we show the validity by giving a specific construction.}
\begin{thm}
 When $a_1M_1(\omega_i-\omega_v)+a_2M_2(\omega_u-\omega_v) \not = 0$ for $i\not =v \not = u \in \{1,2,...,D\}$, there exist constants $c_1$ and $c_2$ such that the time delay of the proposed scheme is upper bounded by $(c_1K+c_2L)MT_s$, where $K$ is the number of consecutive lags and $L$ is the number of snapshots required to estimate  $\mathbb{E}_l \{{x}_1[kb_1+la_1]{x}^*_2[-(kb_2+la_2)]{x}_3[kb_3+la_3]\}$. Here $M = \max\{M_1, M_2, M_3\}$.
\end{thm}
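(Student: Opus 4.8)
The plan is to turn the statement into one about the size of the coefficients in (\ref{Dio}) and then to exhibit an explicit family of triples $(M_1,M_2,M_3)$ for which those coefficients are $O(1)$. The latest instant touched by the estimator in (\ref{proof-main}) is $\max_{i\in\{1,2,3\}}\max_{1\le k\le K,\,1\le l\le L}|kb_i+la_i|\,M_iT_s$, and since $M_i\le M$ this is at most $M T_s\cdot\max_{i,k,l}|kb_i+la_i|$. So it suffices to produce one solution $(\{a_i\},\{b_i\})$ of (\ref{Dio}) with $|a_i|\le c_2$ and $|b_i|\le c_1$ for absolute constants: then $|kb_i+la_i|\le |b_i|k+|a_i|l\le c_1K+c_2L$ uniformly in the range, and the delay is $\le(c_1K+c_2L)MT_s$.

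Such a bound cannot hold for every coprime triple --- for $(p,q,pq)$ with $\gcd(p,q)=1$, reducing the homogeneous equation modulo $p$ forces one coordinate of any nontrivial solution to have magnitude at least $\min(p,q)$ --- so the theorem must be proved by \emph{choosing} the down-sampling rates. Given any target scale $M$, I would take $M_1=M$, $M_2=M+1$, $M_3=2M+1$; here $\gcd(M_1,M_2,M_3)=1$ already because $\gcd(M,M+1)=1$, and $M_1+M_2=M_3$. A direct substitution shows that $\{a_i\}=\{1,1,-1\}$ solves $a_1M_1+a_2M_2+a_3M_3=0$ and $\{b_i\}=\{1,3,-2\}$ solves $b_1M_1+b_2M_2+b_3M_3=1$, and both triples carry the same sign pattern $(+,+,-)$. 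Hence $kb_1+la_1=k+l>0$, $kb_2+la_2=3k+l>0$, $kb_3+la_3=-(2k+l)<0$ for all $k,l\ge1$, the estimator is the well-defined sequence $\{x_1[k+l]\,x_2[3k+l]\,x_3^*[2k+l]\}$ with a conjugation pattern that does not depend on $(k,l)$, and (\ref{diosolution}) becomes $(k+l)M_1+(3k+l)M_2-(2k+l)M_3=k$.

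With this choice $\max_{i,k,l}|kb_i+la_i|=3K+L$, giving $c_1=3$, $c_2=1$. To finish I would check, along the lines of (\ref{haha}) and (\ref{proof-main}), that the estimator is (asymptotically) the desired autocorrelation: the diagonal term reproduces $\sum_v A_v^3 e^{j\phi_v}e^{j\omega_v k}$ exactly because the Diophantine identity above cancels all $l$-dependence there, while a genuinely cross term ($i$, $u$, $v$ not all equal) picks up the factor $\mathbb{E}_l\{e^{jl[M_1(\omega_i-\omega_v)+M_2(\omega_u-\omega_v)]}\}$ --- exactly the factor in (\ref{proof-main}) with $a_1=a_2=1$, using $M_1+M_2=M_3$ --- which under the hypothesis $a_1M_1(\omega_i-\omega_v)+a_2M_2(\omega_u-\omega_v)\neq0$ is a geometric average with nonzero ratio and hence $O(1/L)$.

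I expect the only real obstacle to be the \emph{joint} requirement that the small solution have bounded coordinates and a sign pattern constant over the whole rectangle $1\le k\le K,\ 1\le l\le L$: a bounded homogeneous solution with a vanishing or sign-varying coordinate would either reintroduce large indices or leave the conjugation pattern of the product ambiguous. Choosing $\{a_i\}$ and $\{b_i\}$ with matching sign patterns (both $(+,+,-)$ above) is precisely what removes this difficulty; exhibiting such matched $O(1)$ solutions for the triple $M_1=M$, $M_2=M+1$, $M_3=2M+1$ is the one genuinely hand-crafted step, and everything after it is the substitution already displayed in (\ref{proof-main}).
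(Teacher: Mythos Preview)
Your argument is correct: you reduce the delay to $\max_{i,k,l}|kb_i+la_i|\cdot M T_s$, observe that a uniform $O(1)$ bound on $|a_i|,|b_i|$ cannot hold for arbitrary coprime triples, and then exhibit an explicit one-parameter family $(M,M+1,2M+1)$ for which the solutions $\{a_i\}=(1,1,-1)$, $\{b_i\}=(1,3,-2)$ are constant, have a fixed sign pattern, and give $c_1=3$, $c_2=1$. Your verification of the cross-term factor via $M_1+M_2=M_3$ is also sound.

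The paper follows the same overall strategy---produce one family with $O(1)$ coefficients---but with a different construction. It fixes the small triple $\{2,3,5\}$ and solves (\ref{Dio}) \emph{together with} the extra constraints $a_1+a_2+a_3=0$ and $b_1+b_2+b_3=0$, obtaining $\{a_i\}=(2,-3,1)$, $\{b_i\}=(1,-2,1)$. The zero-sum conditions make these same coefficients valid for the shifted triple $(2+\Gamma,3+\Gamma,5+\Gamma)$ for every $\Gamma\in\mathbb{Z}$, yielding $c_1=2$, $c_2=3$, and placing the conjugate on $x_2$ exactly as in the theorem's display. The advantage of the paper's zero-sum trick is that it is the seed for Theorem~2: the same idea (solve for $a_i,b_i$ with $\sum a_i=\sum b_i=0$, then shift by $\Gamma$) scales to $N$ samplers with consecutive rates. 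Your construction, by contrast, rests on the special relation $M_1+M_2=M_3$, which is tidy for three samplers and gives a slightly tighter spread of constants in $L$, but does not immediately extend.
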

\begin{proof}
Consider a set of integers, say, $\{2,3,5\}$, which satisfies $gcd(2,3,5)=1$. Now we try to find out two integer groups $\{a_1,a_2,a_3\}$ and  $\{b_1,b_2,b_3\}$ such that
\begin{equation}
\left\{
            \begin{array}{lr}
2a_1+3a_2+5a_3 = 0 \\
a_1+a_2+a_3=0  \\
2b_1+3b_2+5b_3 = 1 \\
b_1+b_2+b_3=0  \\
 \end{array}
             \right.
\end{equation}

We choose $\{a_1=2,b_1=1\}$, $\{a_2=-3,b_2=-2\}$, $\{a_3=1,b_3=1\}$ as solutions of (11). Due to $\sum_i a_i=0$ and $\sum_i b_i=0$, clearly they are also solutions to
\begin{equation}
(kb_1+la_1)(2+\Gamma) + (kb_2+la_2)(3+\Gamma)+(kb_3+la_3)(5+\Gamma) =k
\end{equation}
for any $\Gamma \in \mathbb{Z}$. Based on (12), we can get the lags to estimate autocorrelation. Because $k \in \{1, 2, ..., K\}$ and $l \in \{ 1,2,...,L\}$, we have $\max_{i,k,l} |kb_i+la_i| \leq 2K+3L$.  Also $(kb_1+la_1)$ and $(kb_3+la_3)$, i.e., $(k+2l)$ and $(k+l)$, are always positive while $(kb_2+la_2)$, i.e., $(-2k-3l)$, is negative. Thus, the total time delay is upper bounded by $\max_{i,k,l} |kb_i+la_i| \cdot \max_{i}M_iT_s \leq (2K+3L)(5+\Gamma)T_s$.
\end{proof}

%\begin{section}{Framework of Multiple Samplers}
\noindent{ In the following, we give a general framework of multiple samplers. Given $N$ samplers, of which the sampling rates are denoted by $M_1, M_2, ... ,M_{N}$, a distributed co-prime sampling can be naturally constructed by selecting any three of them and implementing with the above scheme.} To provide a concrete strategy to efficiently make use of the triple cross difference between samples collected from each sampler, let us revisit the idea we apply before. For a subgroup of triple samplers, say, $\{M_{i_1}, M_{i_2}, M_{i_3}\}$, $i_1, i_2, i_3 \in \{1,2,...,N\}$, we still try to construct two specific solutions, $\{a_{i_1}, a_{i_2}, a_{i_3}\}$ and $\{b_{i_1}, b_{i_2}, b_{i_3}\}$, such that
\begin{equation}
\label{wanted}
\left\{
            \begin{array}{lr}
a_{i_1}M_{i_1}+a_{i_2}M_{i_2}+a_{i_3}M_{i_3} = 0 \\
a_{i_1}+ a_{i_2}+ a_{i_3}=0 \\
b_{i_1}M_{i_1}+b_{i_2}M_{i_2}+b_{i_3}M_{i_3}=1 \\
b_{i_1}+ b_{i_2}+ b_{i_3}=0 \\
\end{array}
             \right.
\end{equation}
which can be simplified to find out $a_{i_1},a_{i_3},b_{i_1},b_{i_3}$,
\begin{equation}
\label{feiyang}
\left\{
            \begin{array}{lr}
a_{i_1}(M_{i_1}-M_{i_2})+a_{i_3}(M_{i_3}-M_{i_2}) = 0 \\
b_{i_1}(M_{i_1}-M_{i_2})+b_{i_3}(M_{i_3}-M_{i_2}) = 1  \\
 \end{array}
             \right.
\end{equation}
In the following, we try to figure out how many such subgroup can be constructed from ${M_1, M_2, ... ,M_N}$. Without loss of generality, we set ${M_1, M_2, ... ,M_N}$ as the sequence of consecutive numbers starting from $1$ to $N$ shifted by some integer $\Gamma$, i.e, $M_i = i+\Gamma$. To lighten the expression, the following results are presented in an asymptotic sense of $N$. For any $M_i$, we consider the following sequence
\begin{equation}
\label{seq}
M_1-M_i, M_2-M_i, ... ,M_N-M_i
\end{equation}
and try to estimate the number of co-prime pairs among them. Since the sequence in (\ref{seq}) are still consecutive numbers ranging from $(-N,N)$, the number of primes among the sequence is upper bounded by $\pi(N)$, where $\pi(N)$ denotes the number of primes no bigger than $N$. Thus, by randomly picking any two of them, the probability of the two picked numbers which are co-prime is lower bounded by
\begin{equation}
\prod_{j=1}^{\pi(N)} (1-\frac{1}{p^2_j}) > \prod_{j=1}^{\infty} (1-\frac{1}{p^2_j}) = \frac{6}{\pi^2}
\end{equation}
where $p_j$ is the $j^{th}$ prime in the natural order and the above inequality follows from the density of primes \cite{hardy}. Here we use the fact that if we randomly select two numbers from $\mathbb{Z}$, the probability that they both share a prime factor $p_j$ is $\frac{1}{p^2_j}$. Therefore, we can totally find $\frac{6}{\pi^2}\binom{N}{3}$, i.e., $\frac{1}{\pi^2}N(N-1)(N-2)$, triplet sets such that $(M_{i_1}-M_{i_2})$ and $(M_{i_3}-M_{i_2})$ are co-prime and thus there exist solutions satisfying (\ref{feiyang}), which is equivalent to that there are solutions to (\ref{wanted}). Furthermore, without loss of generality, we assume $M_{i_1}>M_{i_2}>M_{i_3}$ and therefore, $M_{i_1}-M_{i_2}>0$ while $M_{i_3}-M_{i_2}<0$ in (\ref{feiyang}). Hence, we can specifically set $a_{i_1}=M_{i_2}-M_{i_3}$, $a_{i_3}=M_{i_1}-M_{i_2}$, $b_{i_1}=\langle (M_{i_1}-M_{i_2})^{-1} \rangle_{(M_{i_2}-M_{i_3})}$ and $b_{i_2}=\langle (M_{i_3}-M_{i_2})^{-1} \rangle_{(M_{i_1}-M_{i_2})}$, which are all positive. Thus both $a_{i_2}$ and $b_{i_2}$ should be negative due to the restrictions in (\ref{wanted}). Therefore, from (\ref{feiyang}), it is clear that both $|a_i|$ and $|b_i|$, if exist, are upper bounded by $2(N-1)$. Moreover, for $k=1,2,...,K$ and $l=1,2,...,L$, $(kb_{i_2}+la_{i_2})$ are always negative whereas $(kb_{i_1}+la_{i_1})$ and $(kb_{i_3}+la_{i_3})$ are positive. According to Theorem 1 and equation (\ref{wanted}), the following theorem can be derived.
\begin{thm}
For arbitrary $N$ samplers, there exists a distributed co-prime sampling scheme which can provide at least $L\frac{1}{\pi^2}N(N-1)(N-2)$ virtual samples to estimate $\mathbb{E}_l \{{x}_{i_1}[kb_{i_1}+la_{i_1}]{x}^*_{i_2}[-(kb_{i_2}+la_{i_2})]{x}_{i_3}[kb_{i_3}+la_{i_3}]\}$ with time delay upper bounded by $2(N-1)(K+L)MT_s$. Here $M = \max_i\{M_i\}$
\end{thm}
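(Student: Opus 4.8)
The plan is to turn the three-sampler construction of (14)--(15) into a count of how many sampler triples the $N$ rates admit, and then to read the delay bound off the explicit magnitudes of the resulting coefficients. Normalize $M_i = i+\Gamma$, so every pairwise difference $M_p-M_q$ is a nonzero integer of magnitude at most $N-1$. For a triple $\{i_1,i_2,i_3\}$ with $M_{i_1}>M_{i_2}>M_{i_3}$, the system (15) is solvable in integers precisely when the two outer differences satisfy $\gcd(M_{i_1}-M_{i_2},\,M_{i_2}-M_{i_3})=1$, so the task reduces to counting triples with coprime outer differences. Fixing the middle index $i_2$, the numbers $M_1-M_{i_2},\dots,M_N-M_{i_2}$ form a block of $N$ consecutive integers inside $(-N,N)$, hence contain at most $\pi(N)$ primes; since two integers share a given prime $p_j$ with probability $p_j^{-2}$, a random pair from the block is coprime with probability at least $\prod_{j=1}^{\pi(N)}(1-p_j^{-2}) > \prod_{j=1}^{\infty}(1-p_j^{-2}) = 6/\pi^2$. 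Summing over the choice of middle index (equivalently, over unordered triples) gives at least $\tfrac{6}{\pi^2}\binom{N}{3} = \tfrac{1}{\pi^2}N(N-1)(N-2)$ admissible triples.

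For each admissible triple I would fix the coefficients as in the text: $a_{i_1}=M_{i_2}-M_{i_3}$, $a_{i_3}=M_{i_1}-M_{i_2}$, $b_{i_1}=\langle(M_{i_1}-M_{i_2})^{-1}\rangle_{M_{i_2}-M_{i_3}}$, $b_{i_3}=\langle(M_{i_3}-M_{i_2})^{-1}\rangle_{M_{i_1}-M_{i_2}}$, and then $a_{i_2}=-(a_{i_1}+a_{i_3})$, $b_{i_2}=-(b_{i_1}+b_{i_3})$ from the zero-sum constraints of (14). A one-line estimate shows each of $a_{i_1},a_{i_3},b_{i_1},b_{i_3}$ lies in $[0,N-1]$, hence $|a_{i_2}|,|b_{i_2}|\le 2(N-1)$, so every coefficient has magnitude at most $2(N-1)$, with the outer ones positive and $a_{i_2},b_{i_2}$ negative. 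Consequently, for every $k\in\{1,\dots,K\}$ and $l\in\{1,\dots,L\}$ the index $kb_{i_2}+la_{i_2}$ is negative while $kb_{i_1}+la_{i_1}$ and $kb_{i_3}+la_{i_3}$ are positive --- exactly the sign pattern needed to form the triple product of (10) --- and the physical sampling time of any such sample is $M_{i_j}(kb_{i_j}+la_{i_j})T_s \le M\,(K\max_j|b_{i_j}|+L\max_j|a_{i_j}|)\,T_s \le 2(N-1)(K+L)MT_s$, which is the asserted delay bound.

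It remains to count the virtual samples and verify the statistics. By (10) --- equivalently, Theorem 1 applied to each triple, whose non-degeneracy hypothesis (the triple-wise analogue of $a_1M_1(\omega_i-\omega_v)+a_2M_2(\omega_u-\omega_v)\ne 0$) fails only on a null set of frequency configurations --- we have $\mathbb{E}_l\{x_{i_1}[kb_{i_1}+la_{i_1}]\,x^*_{i_2}[-(kb_{i_2}+la_{i_2})]\,x_{i_3}[kb_{i_3}+la_{i_3}]\} = \sum_{i=1}^{D}A_i^3 e^{j\phi_i}e^{jk\omega_i}$ for every admissible triple, the same quantity in each case; so the single-triple estimates, each itself an average over the $L$ snapshots $l=1,\dots,L$, can all be pooled, giving a total of $L\cdot\tfrac{1}{\pi^2}N(N-1)(N-2)$ virtual samples feeding the lag-$k$ estimate. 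I expect the only genuinely delicate step to be the coprimality count: $\prod_j(1-p_j^{-2})$ is a density statement rather than an exact lower bound for a finite block, so it has to be used in the asymptotic sense the paper adopts; everything else is bookkeeping on the ranges and signs of the $a_{i_j}$ and $b_{i_j}$.
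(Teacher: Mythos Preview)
Your proposal is correct and follows essentially the same approach as the paper: the paper's argument for Theorem~2 is precisely the discussion in the paragraphs preceding the theorem statement (the normalization $M_i=i+\Gamma$, the reduction to (\ref{feiyang}), the $6/\pi^2$ coprimality density bound giving $\tfrac{1}{\pi^2}N(N-1)(N-2)$ admissible triples, the explicit choices of $a_{i_j},b_{i_j}$ with the $2(N-1)$ magnitude bound and the stated sign pattern), capped by the sentence ``According to Theorem~1 and equation~(\ref{wanted}), the following theorem can be derived.'' Your caveat that the coprimality count is asymptotic rather than a hard finite bound is exactly the caveat the paper makes (``the following results are presented in an asymptotic sense of $N$'').
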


It is worthwhile to mention that following our idea, for any given $N$ samplers, the number of subsets of size three, where $\{a_i\}$ and $\{b_i\}$ satisfying (\ref{wanted}) exist, is upper bounded by
\begin{equation}
\binom{N}{3} - \binom{N_e}{3} - \binom{N-N_e}{3} < \frac{N(N-1)(N-2)}{8}
\end{equation}
where $N_e$ is the number of even numbers among $N$ integers. It is noted that if $M_1, M_2$ and $M_3$ are all even or odd integers, (\ref{feiyang}) is not solvable. Therefore, the proposed construction is close to the optimal.

\section{Direction of Arrival Estimation with Multiple Coprime Arrays}
\noindent As mentioned earlier, another important application of co-prime sensing is to provide enhanced freedoms for DOA estimation, which has many applications\cite{radar2,radar3,radar1}. A co-prime array structure consists of two uniform arrays with $M_1-1$ and $2M_2$ sensors respectively. {The positions of the $M_1$ sensors are given by $\{M_2m_2d, m_2=1,2,...,M_1-1 \}$ and the positions of the other $2M_2$ sensors are given by $\{ M_1m_1d, m_1=0,1,...,2M_2-1\}$.} Here $d=\frac{\lambda}{2}$ and $\lambda$ corresponds to the wavelength. As indicated by Bazout theorem, the difference set $\{m_1M_1 - m_2M_2\}$ will cover all consecutive integers from $-M_1M_2$ to $M_1M_2$, which can further provide $(2M_1M_2+1)$ DOF. In general, there are two primary concerns in DOA estimation. The first is the number of consecutive lags to estimate autocorrelation. As shown in \cite{coprime1}, both the resolution and DOF are proportional to the number of the longest consecutive lags generated by the difference set \footnote{Though consecutiveness can be relaxed by only requiring distinct lags with sparse sensing techniques \cite{sparse}. }. Second, a larger minimal distance between sensors will always be desirable in order reduce coupling.

 Let $a_l(\theta_i) = e^{j(2\pi/\lambda) d_l \sin\theta_i}$ be the element of the steering vector corresponding to direction $\theta_i$, where $d_l$ is the position of $l^{th}$ sensor. Assuming $f_c$ to be the center frequency of the band of interest, for narrow-band sources centered at $f_i+f_c$, $i=1,2,...,D$, the received signal being down-converted to baseband at the $l^{th}$ sensor is expressed by
\begin{equation}
x_l(t) = \sum_{i=1}^D a_l(\theta_i)s_i(t)e^{2\pi f_i t}
\end{equation}
where $s_i(t)$ is a narrow-band source. When a slow-fading channel is considered, we assume $s_i(t)$ as some constant $s_i$ in a coherence time block \cite{FSF}. With the similar idea we use in frequency estimation, when $n_1+n_3 = n_2$,
\begin{equation}
\label{three-chanel}
\mathbb{E}_{n_1,n_3}\big[ x_{l_1}[n_1]\cdot x^*_{l_2}[n_2] \cdot x_{l_3}[n_3] \big] = \sum_{i=1}^{D} s^3_ie^{j(2\pi/\lambda) (d_{l_1}-d_{l_2}+d_{l_3} )\sin\theta_i}
\end{equation}
where $ x_{l}[n]= x_{l}(n)+w_{l}(n)$, $l_1, l_2, l_3 \in \{1,2,...,N\}$ and $N=M_1+2M_2-1$. Similarly, $\mathbb{E}_{n_1,n_3}[x^*_{l_1}(n_1)\cdot x_{l_2}(n_2) \cdot x^*_{l_3}(n_3)] = \sum_{i=1}^{D} s^3_ie^{j(2\pi/\lambda) (-d_{l_1}+d_{l_2}-d_{l_3} )\sin\theta_i}$. With the above assumptions, it suffices to estimate DOA in the case of complex waveforms using the third order statistics rather than the second order one.

\begin{thm}
\label{DOA}
By assigning $M_1=qp_1$, $M_2=qp_2$ and $M_3=p_1p_2$ such that $q, p_1$ and $p_2$ are relatively co-prime positive integers, sensors are located at $\{m_1M_1, m_2M_2, m_3M_3 ~|~ m_1 \in \{0,1,...,2p_2-1\}, m_2 \in \{0,1,...,p_1-1\}, m_3 \in \{0,1,...,q-1 \}\}$ to form three uniform subarrays. Then the difference set $\{\pm (m_1M_1-m_2M_2) \pm m_3M_3\}$ will contain consecutive lags starting from $-p_1p_2q$ to $p_1p_2q$.
\end{thm}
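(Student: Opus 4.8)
The plan is to exploit the multiplicative structure hidden in the assignment $M_1 = qp_1$, $M_2 = qp_2$, $M_3 = p_1p_2$, which is the key idea of the construction. Observe that $M_3 = p_1 p_2$ is coprime to $q$, $M_1/q = p_1$ is coprime to $p_2 = M_2/q$, and more to the point, $\gcd(M_1, M_2) = q$, so $m_1 M_1 - m_2 M_2 = q(m_1 p_1 - m_2 p_2)$. Since $p_1$ and $p_2$ are coprime, a Bezout-type argument of the type already used in the excerpt (the co-prime array result from \cite{coprime1}, invoked around equation (\ref{Bzout})) shows that as $m_1$ ranges over $\{0,1,\dots,2p_2-1\}$ and $m_2$ over $\{0,1,\dots,p_1-1\}$, the quantity $m_1 p_1 - m_2 p_2$ takes \emph{every} integer value in $[-p_1p_2, p_1p_2]$. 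Hence $m_1 M_1 - m_2 M_2$ takes every value in $q\mathbb{Z} \cap [-qp_1p_2, qp_1p_2]$, i.e.\ all multiples of $q$ of absolute value at most $p_1p_2q$.

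Next I would bring in the third subarray. We have $m_3 M_3 = m_3 p_1 p_2$ with $m_3 \in \{0,1,\dots,q-1\}$, and similarly $-m_3 M_3$ covers $\{0,-p_1p_2,\dots,-(q-1)p_1p_2\}$; together $\pm m_3 M_3$ covers the residues $\{-(q-1)p_1p_2,\dots,-p_1p_2,0,p_1p_2,\dots,(q-1)p_1p_2\}$, which is a complete set of representatives of $p_1p_2\mathbb{Z}$ spanning a window slightly wider than one full period $qp_1p_2$ in each direction. The crucial arithmetic point is that $\gcd(q, p_1p_2) = 1$: adding an element of $q\mathbb{Z}$ (from the first two subarrays) to an element of $p_1p_2\mathbb{Z}$ (from the third) can, by the Chinese Remainder Theorem, realize \emph{any} residue class modulo $qp_1p_2$. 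So for a target integer $k$ with $|k| \le p_1p_2q$, I would first choose $m_3$ (and the sign of $m_3M_3$) so that $k \mp m_3M_3 \equiv 0 \pmod q$, then solve for $m_1, m_2$ so that $m_1M_1 - m_2M_2$ (or its negative) equals the resulting multiple of $q$; the only thing to check is that the chosen $m_3M_3$-shift keeps $k \mp m_3M_3$ inside the range $[-qp_1p_2, qp_1p_2]$ where the first step is guaranteed to succeed, and that the index ranges $m_1 \in \{0,\dots,2p_2-1\}$, $m_2 \in \{0,\dots,p_1-1\}$, $m_3 \in \{0,\dots,q-1\}$ are exactly what the co-prime lemma and the CRT reduction demand.

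Concretely, the steps in order are: (1) reduce $m_1M_1 - m_2M_2$ to $q(m_1p_1 - m_2p_2)$ and cite the standard co-prime difference-set lemma to get all multiples of $q$ in $[-qp_1p_2,qp_1p_2]$; (2) record that $\pm m_3M_3$ over the stated range of $m_3$ gives a full residue system mod $p_1p_2$ covering $[-(q-1)p_1p_2,(q-1)p_1p_2]$; (3) for arbitrary $|k|\le p_1p_2q$, use $\gcd(q,p_1p_2)=1$ to pick the right $\pm m_3M_3$ so that $k\mp m_3M_3\in q\mathbb{Z}$, then apply step (1); (4) verify the range bookkeeping so that $k\mp m_3M_3$ still lies in $[-qp_1p_2,qp_1p_2]$. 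The main obstacle I anticipate is step (4): one has to be slightly careful at the extremes $|k|$ near $p_1p_2q$, since an ill-chosen sign of $m_3M_3$ could push $k\mp m_3M_3$ out of the window where step (1) applies; the fix is to always move $k$ \emph{toward} zero by choosing the sign of the $m_3M_3$ correction appropriately, and to check that the coprimality $\gcd(q,p_1p_2)=1$ guarantees such an in-range choice always exists. Everything else is a routine combination of Bezout's identity and the Chinese Remainder Theorem, entirely parallel to the two-sampler argument already invoked in the paper.
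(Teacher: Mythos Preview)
Your proposal is correct and follows essentially the same two-stage Bezout argument as the paper: first factor $m_1M_1-m_2M_2=q(m_1p_1-m_2p_2)$ and invoke the standard co-prime lemma to get all multiples of $q$ in $[-qp_1p_2,qp_1p_2]$, then combine with $\pm m_3M_3\in p_1p_2\mathbb{Z}$ using $\gcd(q,p_1p_2)=1$. The paper's proof compresses your steps (3)--(4) into the phrase ``applying Bazout theorem again,'' whereas you spell out the residue selection and the range bookkeeping explicitly; your extra care there is warranted but does not constitute a different approach.
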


\begin{proof}
Based on Bazout theorem, for $m_1M_1-m_2M_2 = q(m_1p_1-m_2p_2)$, where $m_1 \in \{0,1,...,2p_2-1\}$ and $m_2 \in \{0,1,...,p_1-1\}$, the difference set $\pm\{m_1p_1-m_2p_2\}$ enumerates $ \{-p_1p_2,-p_1p_2+1, ... , p_1p_2\}$. Now, applying Bazout theorem again on the two sequences, $\pm\{m_1M_1-m_2M_2\}$ and $\pm\{m_3M_3\}$, which essentially are the multiples of $q$ and $p_1p_2$, respectively, the triple difference set $\{\pm (m_1M_1-m_2M_2) \pm m_3M_3\}$ clearly covers each integer starting from $-p_1p_2q$ to $p_1p_2q$.
\end{proof}

{From Theorem \ref{DOA}, it shows that with $(p_1+2p_2+q-1)$ sensors, at least $(2p_1p_2q+1)$ DOF can be provided. Comparing to conventional co-prime array based DOA estimation, given $(M_1+2M_2-1)$ sensors, the corresponding DOF is $2M_1M_2+1 \leq (\frac{M_1+2M_2}{2})^2+1$. Moreover, the minimal distance between any two adjacent sensors is $\min \{q, p_1, p_2 \}$, since the positions of any two sensors share at least one common divisor from  $\{q, p_1, p_2 \}$. Thus, the minimal distance of sensor is $O(Nd)$.}

On the other hand, to estimate the autocorrelation at lag $k$, we will find the snapshots at time $n_1, n_2$, and $n_3$, where $n_1+n_3=n_2$, from the three uniform subarrays, respectively.{Therefore, assuming that each sensor has collected $L$ snapshots, we can find ${L(L-1)}/{2}$ samples for autocorrelation estimation for each lag by the proposed strategy, instead of $L$ samples in co-prime arrays, though the computational complexity may slightly increase to construct those samples.} Thus, those additional samples can compensate for precision downside of the third order statistics applied in proposed Diophantus arrays (\ref{three-chanel}), which is less robust than the second-order based estimation in co-prime arrays.

\section{Simulation}
\noindent We present the results of two numerical simulations in Fig. 1, which compares the performance of the proposed Diophantine equation based sparse reconstruction in the applications of frequency and DOA estimation with that of traditional Multiple Signal Classification (MUSIC).

For frequency estimation, we randomly generate $5$ and $10$ frequencies respectively and set $K=L$. The proposed method is used to estimate the frequencies with three samplers of down sampling rate $M_1=2+10^6$, $M_2=3+10^6$ and $M_3=5+10^6$. We average the root mean square error (RMSE) on 100 independent Monte-carlo runs with signal-to-noise ratio (SNR) ranged from -10 to 10dB. To evaluate the performance of the proposed method, we choose MUSIC as a baseline to estimate frequencies and the results are shown in Fig. 1(a). As expected, a small compromise in accuracy exists for proposed strategy since we use the third order statistics instead of the second one. However, the time delay of co-prime sampling is around $10^{6}$ times longer than that of ours. In the case of DOA estimation, we randomly generate $3$ and $10$ independent sources, respectively. For proposed Diophantus equation based arrays, we select $p_1=4$, $p_2=3$ and $q=5$ and thus totally $14$ sensors are used. We still use MUSIC algorithm as the baseline with $L=18$ and $L=50$ snapshots. As analyzed before, for each lag $k$, we can find $O(L^2)$ samples. The simulations are run 100 times and the averaged RMSEs are shown in Fig. 1(b). We can see that the proposed strategy in some cases is even with better performance than MUSIC. Furthermore, our method provides up to $149$ DOF compared with $57$ in a co-prime array. Also the minimal distance between sensors is $3d$, comparing to $d$ in an existing nested or co-prime array.
\begin{figure}
\label{sim}
	 \begin{minipage}[b]{0.5 \textwidth}
     \centering
   \includegraphics[width=0.95\textwidth]{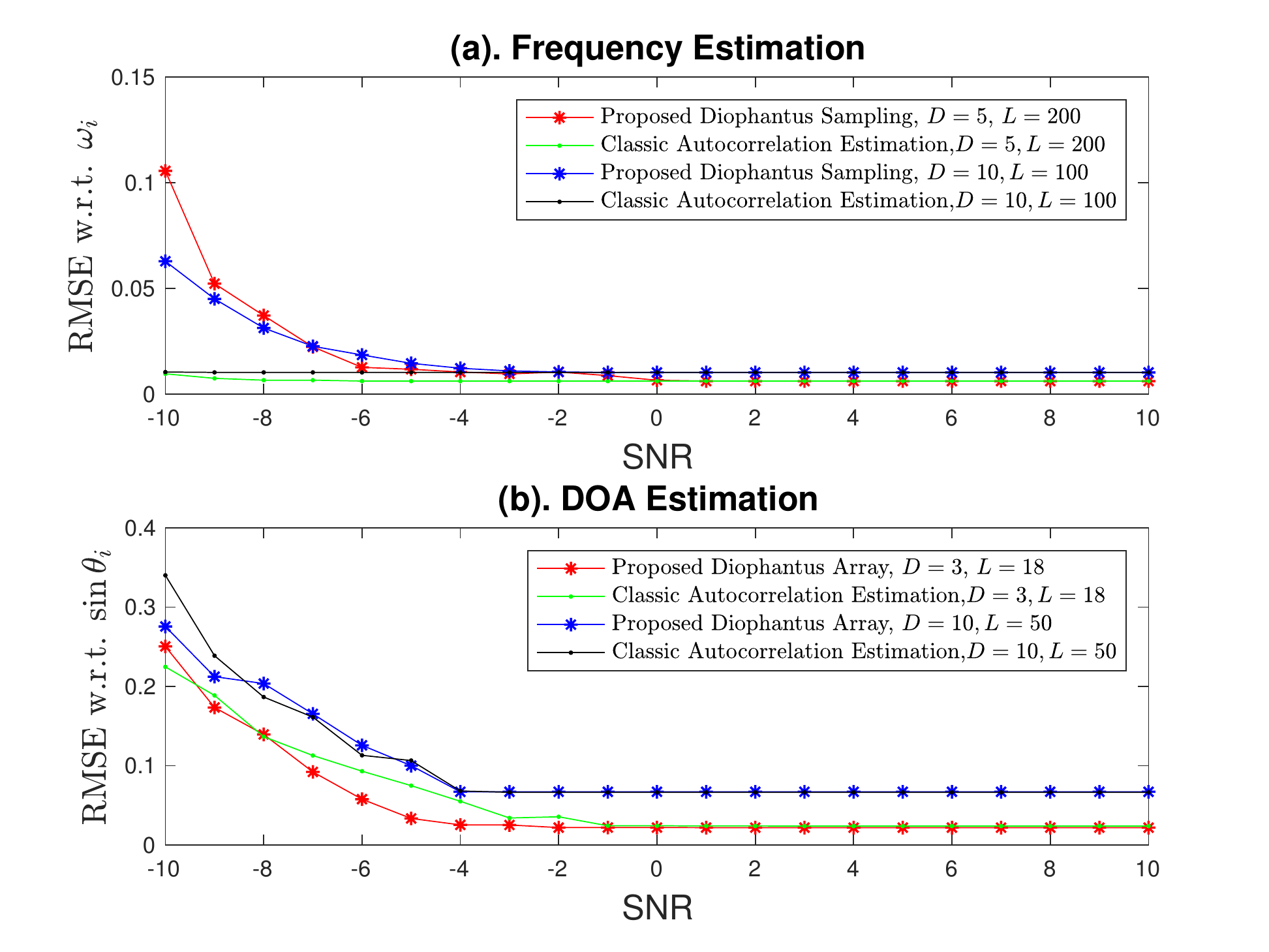}\hspace{0.01in}
   \centering \caption{Numeriacal Results Comparison}
          \end{minipage}

%	 \begin{minipage}[b]{1 \textwidth}
%	 \includegraphics[width=0.24\textwidth]{UCI_5_new_privacy_ed2.eps}\hspace{0.01in}
 %    \includegraphics[width=0.24\textwidth]{UCI_50_new_privacy_ed2.eps}\hspace{0.01in}
 %     \includegraphics[width=0.24\textwidth]{USPS_5_new_privacy.eps}\hspace{0.01in}
 %     \includegraphics[width=0.24\textwidth]{USPS_5_new_privacy_9.eps}
%      \caption{Privacy Loss of Private Distributed Optimizations in UCI and USPS}
 %         \end{minipage}
\end{figure}

\section{Conclusion}
\noindent In this letter, we generalize the co-prime based sparse sensing based on the idea of Diophantine equations to deal with complex waveforms. The proposed scheme establishes a new tradeoff which provides more flexibility in the parameter selection and the sparsity requirement. Experimental results also support the theory.

\bibliographystyle{plain}
\bibliography{ref}

\end{document}